\newtheorem{theorem}{Theorem}
\newtheorem{proposition}[theorem]{Proposition}
\theoremstyle{definition}
\newtheorem{remark}{Remark}
\renewcommand{\Psi}{\overline{\Phi}}
\newcommand{\R}{\mathbb{R}}
\newcommand{\F}
{\Sigma}
\renewcommand{\le}{\leqslant}
\renewcommand{\ge}{\geqslant}
\begin{document}

\title[A Cauchy--Schwarz-type inequality based on a Lagrange-type identity]{An intertwined Cauchy--Schwarz-type inequality based on a Lagrange-type identity}


\author{Iosif Pinelis}

\address{Department of Mathematical Sciences\\
Michigan Technological University\\
Houghton, Michigan 49931, USA\\
\email{ipinelis@mtu.edu}}

%

\CorrespondingAuthor{Iosif Pinelis}


\date{22 June 2015}                               

\keywords{
Cauchy--Schwarz-type inequality, Lagrange-type identity}

\subjclass{
Primary 26D15; secondary 26D20}



\begin{abstract}
Based on an apparently new Lagrange-type identity, a Cauchy--Schwarz-type inequality is proved. The mentioned identity is obtained by using certain ``macro'' variables; it is hoped that such a method can be used to prove or produce other identities and inequalities. 
\end{abstract}

\maketitle







\section{Result}

Let $a_1,a_2,a_3,b_1,b_2,b_3$ be any real numbers. 
The well-known Lagrange identity (see e.g.\ \cite{greene-krantz}) 
$$(a^2_{1}+a^2_{2}+a^2_{3})(b^2_{1}+b^2_{2}+b^2_{3})=(a_{1}b_{1}+a_{2}b_{2}+a_{3}b_{3})^2+\sum_{1\le i<j\le3}(a_{i}b_{j}-a_{j}b_{i})^2$$
immediately yields the Cauchy--Schwarz inequality (see e.g.\ \cite{steele}) 
$$(a^2_{1}+a^2_{2}+a^2_{3})(b^2_{1}+b^2_{2}+b^3_{3})\ge (a_{1}b_{1}+a_{2}b_{2}+a_{3}b_{3})^2. $$

In this note
we shall prove the following, apparently new Cauchy--Schwarz-type inequality, based on an apparently new Lagrange-type identity. 
\begin{proposition}\label{prop:}
\begin{equation}\label{eq:}
\begin{aligned}
(a^2_{1}+ & b^2_{2}+b^2_{3})(a^2_{2}+b^2_{3}+b^2_{1})(a^2_{3}+b^2_{1}+b^2_{2}) \\ 
\ge & (a_{1}b_{1}+a_{2}b_{2}+a_{3}b_{3})^2\,(b^2_{1}+b^2_{2}+b^2_{3}) \\  +&\tfrac{1}{2}\,\big[b_1^2(a_2b_3-a_3b_2)^2+b_2^2(a_3b_1-a_1b_3)^2+b_3^2(a_1b_2-a_2b_1)^2\big]. 	
\end{aligned}	
\end{equation}
\end{proposition}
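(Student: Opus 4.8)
The plan is to collapse the triple product on the left of \eqref{eq:} into a single polynomial identity by promoting $S:=b_1^2+b_2^2+b_3^2$ to a ``macro'' variable. Writing each factor as $a_i^2+b_j^2+b_k^2=S+(a_i^2-b_i^2)$ for $\{i,j,k\}=\{1,2,3\}$ and setting $c_i:=a_i^2-b_i^2$, the left-hand side becomes the elementary cubic $\prod_{i=1}^3(S+c_i)$; since $c_1+c_2+c_3=A-S$ with $A:=a_1^2+a_2^2+a_3^2$, the $S^3$ terms cancel and
\[
(a_1^2+b_2^2+b_3^2)(a_2^2+b_3^2+b_1^2)(a_3^2+b_1^2+b_2^2)=AS^2+S\,e_2(c)+e_3(c),
\]
where $e_2(c)=c_1c_2+c_1c_3+c_2c_3$ and $e_3(c)=c_1c_2c_3$.

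Next I would insert the classical Lagrange identity quoted above in the compact form $AS=P^2+L$, with $P:=a_1b_1+a_2b_2+a_3b_3$ and $L:=\sum_{1\le i<j\le3}(a_ib_j-a_jb_i)^2$. Multiplying by $S$ gives $AS^2=P^2S+LS$, so the left-hand side equals $P^2S+LS+S\,e_2(c)+e_3(c)$. As $P^2S$ is exactly the main term on the right of \eqref{eq:}, the proposition reduces to showing
\[
R:=LS+S\,e_2(c)+e_3(c)-\tfrac12 Q\ge0,
\]
where $Q:=b_1^2u_1^2+b_2^2u_2^2+b_3^2u_3^2$ and $u_1=a_2b_3-a_3b_2$, $u_2=a_3b_1-a_1b_3$, $u_3=a_1b_2-a_2b_1$ are the components of $\mathbf a\times\mathbf b$ (writing $\mathbf a=(a_1,a_2,a_3)$, $\mathbf b=(b_1,b_2,b_3)$).

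The heart of the matter --- and the point at which the promised ``Lagrange-type identity'' has to be manufactured --- is to rewrite $R$ as a manifestly nonnegative sextic form, ideally an explicit sum of squares. A useful guide is the equality case: for $\mathbf a=\mathbf b$ each factor equals $S$, $P=S$ and $\mathbf a\times\mathbf b=\mathbf 0$, so both sides of \eqref{eq:} equal $S^3$ and hence $R$ must vanish there. To find the right shape I would first specialize $b_3=0$, where a direct computation collapses $R$ to
\[
R=(b_1^2+b_2^2)(a_1a_2-b_1b_2)^2+a_3^2\big(a_1^2a_2^2+a_1^2b_1^2+a_2^2b_2^2\big),
\]
which is already a sum of squares and suggests that the general $R$ is a cyclic sum of such blocks.

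The main obstacle is the full case. The ``pure'' monomials of $R$ (those in $a_i^2,b_i^2$ only, coming from $e_2(c)$, $e_3(c)$ and parts of $LS$) are routine, but the genuinely mixed terms $a_ia_jb_ib_j$ contributed by $LS$ and by $Q$ are delicate and must cancel into squares; tracking them is where the bookkeeping bites. I would attack this either by a guess-and-verify of a cyclically symmetric sum of squares modelled on the $b_3=0$ reduction, or by a second ``macro'' substitution chosen to linearize those mixed terms. Once such an identity $R=\text{(sum of squares)}$ is established, discarding the squares delivers \eqref{eq:} immediately, with equality characterized by the simultaneous vanishing of all those squares.
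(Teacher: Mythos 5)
Your reduction is sound as far as it goes: writing each factor as $S+c_i$ with $S=b_1^2+b_2^2+b_3^2$ and $c_i=a_i^2-b_i^2$, expanding the cubic, and substituting the classical Lagrange identity $AS=P^2+L$ does correctly reduce \eqref{eq:} to $R:=LS+S\,e_2(c)+e_3(c)-\tfrac12 Q\ge0$, and your formula for $R$ on the slice $b_3=0$ checks out. But the proposal stops exactly where the difficulty begins: you never prove $R\ge0$ in general. ``Guess-and-verify a cyclically symmetric sum of squares'' and ``a second macro substitution chosen to linearize those mixed terms'' are plans, not arguments, and there is no evidence that the hoped-for representation exists: a nonnegative sextic in six variables need not be a sum of squares, the paper reports that Positivstellensatz-type certificate searches failed on this very polynomial without substantial human intervention, and its concluding section attributes this to the unusually large number of distinct varieties on which the minimum of $\tilde d$ is attained or nearly attained (so the zero set of $R$ is much bigger than the single orbit $\mathbf a=\mathbf b$ you use as a guide). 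The central claim of the proposition therefore remains unproved in your write-up.

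For comparison, the paper does not produce a sum of squares at all. It passes to different macro variables $x_i=a_1a_2a_3/a_i$, $y_i=b_1b_2b_3/b_i$, $p_i=(x_i-y_i)y_i$, $z_i=y_i^2$, establishes (by computer-algebra verification) the identity $y_1y_2y_3\,\tilde d=p_1p_2p_3+c_1z_1+c_2z_2+c_3z_3$ with coefficients $c_1=p_2^2+p_2p_3+p_3^2$, etc.\ (these $c_i$ are unrelated to yours), and then minimizes the right-hand side over nonnegative $z_i$ subject to the constraint $(p_1+z_1)(p_2+z_2)(p_3+z_3)\ge0$, which reduces the problem to four elementary boundary cases. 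To complete your route you would need either an explicit SOS (or SOS-modulo-constraints) certificate for $R$ --- precisely the object the author could not get automated tools to deliver --- or some substitute for the paper's constrained-minimization argument. As it stands, the proposal is an honest and genuinely useful reduction plus a conjecture, not a proof.
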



Note that -- in distinction with the left-hand side of the Cauchy--Schwarz inequality, with the $a_i$'s and $b_i$'s separated in the two factors there -- the $a_i$'s and $b_i$'s are intertwined in the three factors on the left-hand side of inequality \eqref{eq:}. 

%
%

\section{Proof}
 
\begin{proof}[Proof of Proposition~\ref{prop:}]
Let $\tilde d$ denote the difference between the left- and right-hand sides of inequality \eqref{eq:}, which can then 
be rewritten as $\tilde d\ge0$. 
Note that $\tilde d$ is a polynomial (of degree $6$ in $6$ variables). Therefore, in principle, inequality \eqref{eq:} can be verified completely algorithmically, using one of the suitable known tools. One of these tools is the quantifier elimination by cylindrical algebraic decomposition (see e.g.\ \cite{collins98}), based on the Tarski theory \cite{tarski48}; for instance, in Mathematica this theory is implemented via \verb!Reduce[]! and related commands. Alternatively, one may try some of the various 
Positivstellens\"atze of real algebraic geometry (see e.g.\ \cite{krivine64a,
cassier,handelman85
}), which can provide a so-called certificate of positivity to a polynomial that is indeed positive on a set defined by a system of polynomial inequalities (over $\R$). 
However, our polynomial $\tilde d$ turns out to be too complicated for these tools to succeed without substantial human intervention. 

To prove Proposition~\ref{prop:}, many rounds of rewriting of $\tilde d$ were done -- manually, each round verified with Mathematica. Complete details of this multi-step rewriting can be seen in the 6-page Mathematica notebook 1stRewriting.nb and its pdf \break 
image 1stRewriting.pdf, found in the zip file MathematicaVerfication.zip, which can be downloaded at \url{https://works.bepress.com/iosif-pinelis/22/}. 
After that, to verify inequality \eqref{eq:} in the rewritten form, 
the mentioned Mathematica command \verb!Reduce[]! took about 23 min, which is a very long time for a contemporary computer (with a $3.5$ GHz CPU). One may therefore surmise that a description of the execution of this command would possibly take hundreds or thousands of pages when transcribed into regular mathematical writing. 

Fortunately, a few more rounds of rewriting, presented in the Mathematica notebook 2ndRewriting.nb and its pdf image 2ndRewriting.pdf in the mentioned zip file MathematicaVerfication.zip, yield a key identity, which allows one to prove inequality \eqref{eq:} rather quickly and easily. 


To state this identity, note first that, without loss of generality (wlog), all the $a_i$'s and $b_i$'s are non\-zero. 
%
For $i=1,2,3$, introduce the new, ``macro'' variables  
\begin{equation*}
	x_i:=a_1 a_2 a_3/a_i,\quad y_i:=b_1 b_2 b_3/b_i,\quad p_i:=(x_i-y_i)y_i, \quad z_i:=y_i^2\ge0,  
\end{equation*}
and then 
\begin{equation}\label{eq:0}
c_1:=p_2^2 + p_2 p_3 + p_3^2,\quad c_2:=p_1^2 + p_1 p_3 + p_3^2,\quad c_3:=p_2^2 + p_2 p_1 + p_1^2.  
\end{equation}
Note that $x_1 x_2 x_3=(a_1 a_2 a_3)^2>0$, 
$y_1 y_2 y_3=(b_1 b_2 b_3)^2>0$, $c_1\ge0$, $c_2\ge0$, and $c_3\ge0$. Moreover, 
\begin{equation}\label{eq:1}
(p_1+z_1)(p_2+z_2)(p_3+z_3)\ge0. 
\end{equation} 
The mentioned crucial identity is  
\begin{equation}\label{eq:ident}
	y_1 y_2 y_3\tilde d=d:=
	p_1 p_2 p_3+c_1 z_1+c_2 z_2+c_3 z_3. 	
\end{equation}
As it is clear now, this identity was difficult to obtain. However, it is quite straightforward (but tedious) to verify it. Such a verification is best done using one of a number of available computer algebra programs. E.g., it takes Mathematica only about $0.15$ sec to check identity \eqref{eq:ident}; for details, see the Mathematica notebook checkingTheIdentity.nb and/or its pdf image checkingTheIdentity.pdf in the same zip file, MathematicaVerfication.zip.  


Since $y_1 y_2 y_3>0$, $\tilde d$ equals $d$ in sign. So, it suffices to show that $d\ge0$ -- for any real $p_i$'s, the $c_i$'s as in \eqref{eq:0}, and any nonnegative $z_i$'s satisfying \eqref{eq:1}. 

Note here that without loss of generality $p_1 p_2 p_3<0$ -- otherwise, the desired inequality $d\ge0$ immediately follows because the $c_i$'s and $z_i$'s are nonnegative. So, we may assume that the $p_i$'s are are all nonzero and hence the $c_i$'s are all strictly positive. 

Take any nonzero real $p_i$'s and any nonnegative $z_i$'s such that \eqref{eq:1} holds. Let us then fix those $z_1$ and $z_2$, and let $z_3$ be decreasing as long as $z_3$ remains nonnegative and \eqref{eq:1} holds; clearly, this process can stop only when the value of $z_3$ becomes either $0$ or $-p_3$, and in the latter case we must have $-p_3>0$. 
Moreover, since $c_i>0$ for all $i$, the value of $d$ will not increase after this process is complete. 

We can then proceed similarly by decreasing $z_2$ (instead of $z_3$), and then by decreasing $z_1$. 

Let now $(z_1,z_2,z_3)$ be any minimizer of $d$, subject to the stated conditions on the $z_i$'s. Then it follows from the above reasoning that $z_i\in\{0,-p_i\}$ for each $i=1,2,3$; moreover, if at that $z_i=-p_i$ for some $i$, then we must have $-p_i>0$. 
So, by the symmetry with respect to permutations of the indices, it is enough to consider the following four cases: 

(i) $z_1=-p_1>0$, $z_2=-p_2>0$, $z_3=-p_3>0$; 

(ii) $z_1=-p_1>0$, $z_2=-p_2>0$, $z_3=0$; 

(iii) $z_1=-p_1>0$, $z_2=0$, $z_3=0$; 

(iv) $z_1=0$, $z_2=0$, $z_3=0$. 

In case (i), $\min_{z_1,z_2,z_3}d=-(p_1 + p_2) (p_1 + p_3) (p_2 + p_3)>0$. 

In case (ii),  $\min_{z_1,z_2,z_3}d=-p_1 p_2 (p_1 + p_2) - p_1 p_2 p_3 + (-p_1 - p_2) p_3^2$, which is a convex quadratic polynomial in $p_3$, with discriminant $-p_1 p_2 (4 p_1^2 + 7 p_1 p_2 + 4 p_2^2)<0$, whence again $\min_{z_1,z_2,z_3}d>0$. 

In case (iii), $\min_{z_1,z_2,z_3}d=-p_1 (p_2^2 + p_3^2)>0$. 

In case (iv), condition \eqref{eq:1} becomes $p_1 p_2 p_3\ge0$, which contradicts the assumption $p_1 p_2 p_3<0$. 

Thus, $\min_{z_1,z_2,z_3}d\ge0$ in all feasible cases, and 
\eqref{eq:} is proved. 
\end{proof}

\section{Discussion}

Note that each of the factors on the left-hand side of inequality \eqref{eq:} is the sum of three terms. It would be interesting (but possibly very difficult) to extend this inequality to an ``intertwined'' one similarly involving sums of more than three terms. 

\bigskip

As was noted in the proof of Proposition~\ref{prop:}, wlog all the $b_i$'s are non\-zero. Introducing then $k_i:=a_i/b_i$, we can rewrite inequality \eqref{eq:} as follows: 
\begin{equation}\label{eq:k_i}
\begin{aligned}
(k_1^2 b^2_{1}+ & b^2_{2}+b^2_{3})
(k_2^2 b^2_{2}+b^2_{3}+b^2_{1})
(k_3^2 b^2_{3}+b^2_{1}+b^2_{2}) \\ 
\ge & (k_1 b_{1}^2+k_2 b_{2}^2+k_3 b_{3}^2)^2\,(b^2_{1}+b^2_{2}+b^2_{3}) \\  +&\tfrac{1}{2}\,b^2_{1}b^2_{2}b^2_{3}\,
\big[(k_1-k_2)^2+(k_2-k_3)^2+(k_1-k_3)^2\big] 	
\end{aligned}	
\end{equation}
for all real $b_i$'s and $k_i$'s. 

\begin{remark}\label{rem:}
The constant factor $\frac12$ in \eqref{eq:k_i} and hence in \eqref{eq:} is optimal -- that is, the greatest possible one. Indeed, if the factor $\frac12$ in \eqref{eq:k_i} is replaced by any real  constant $C>\frac12$, then for $k_1=k_2=0$ 
the difference between the left- and right-hand sides of inequality \eqref{eq:k_i} will be 
$(1-2C) b_1^2 b_2^2 b_3^2 k_3^2+\left(b_1^2+b_2^2\right) \left(b_1^2+b_3^2\right)
   \left(b_2^2+b_3^2\right)$, which will go to $-\infty$ as $k_3\to\infty$ if $b_1 b_2 b_3\ne0$. 
\end{remark}

\bigskip

One may also note that \eqref{eq:} immediately implies the following simpler but weaker ``intertwined'' inequality: 
\begin{equation*}
	(a^2_{1}+b^2_{2}+b^2_{3})(a^2_{2}+b^2_{3}+b^2_{1})(a^2_{3}+b^2_{1}+b^2_{2}) 
\ge (a_{1}b_{1}+a_{2}b_{2}+a_{3}b_{3})^2\,(b^2_{1}+b^2_{2}+b^2_{3}). 
\end{equation*}

Inequality \eqref{eq:} was conjectured on the MathOverflow site 
\cite{lagr-MO} and proved there by the author of the present note. 

\section{Conclusion}
Looking back at the cases (i)--(iv) in the proof of Proposition~\ref{prop:} and at Remark~\ref{rem:}, we notice a rather large number of entire varieties of cases of minima and near-minima of $d$ or $\tilde d$. This may at least partially explain the difficulties with using standard methods, such as cylindrical algebraic decomposition and certificates of positivity provided by Positivstellens\"atze, mentioned in the beginning of the proof of Proposition~\ref{prop:}. 

The ``macro''-variables method, demonstrated in this note, may turn out to be useful in other settings where the other methods are not feasible. It would be of great interest if computers could be taught this method, as they have been taught the mentioned standard methods. 


\bibliographystyle{abbrv}


\bibliography{C:/Users/ipinelis/Documents/pCloudSync/mtu_pCloud_02-02-17/bib_files/citations04-02-21}


\end{document}